\title
{Geometric properties of jet schemes}
\author{Shihoko Ishii} 
\address{Department of Mathematics, Tokyo Institute of
Technology, Oh-Okayama, Meguro, Tokyo, Japan
\newline
e-mail : shihoko@@math.titech.ac.jp}
\newcommand{\bC}{{\Bbb C}}
\newcommand{\bQ}{{\Bbb Q}}
\newcommand{\bN}{{\Bbb N}}
\newcommand{\bA}{{\Bbb A}}
\newcommand{\R}{{\mathcal R}}
\newcommand{\Proj}{\operatorname{Proj}}
\newcommand{\Hom}{\operatorname{Hom}}
\newcommand{\stm}{{\spec k[t]/(t^{m+1})}}
\newcommand{\sTm}{{\spec K[t]/(t^{m+1})}}
\newcommand{\tm}{{k[t]/(t^{m+1})}}
\newcommand{\supp}{\operatorname{Supp}}
\let \cedilla =\c
\renewcommand{\c}[0]{{\mathbb C}}  
\renewcommand{\o}[0]{{\mathcal O}} 
\newcommand{\spec}[0]{\operatorname{Spec}}
\newcommand{\sing}[0]{\operatorname{Sing}}
\newcommand{\oa}{\overline{A}}
\newcommand{\ob}{\overline{B}}
\def\to {\longrightarrow}
\newtheorem{thm}{Theorem}[section]
\newtheorem{lem}[thm]{Lemma}
\newtheorem{cor}[thm]{Corollary}
\newtheorem{prop}[thm]{Proposition}
\theoremstyle{definition}
\newtheorem{defn}[thm]{Definition}
\newtheorem{say}[thm]{}
\newtheorem{exmp}[thm]{Example}
\newtheorem{rem}[thm]{Remark}
\theoremstyle{remark}
\begin{document}

\maketitle
\markboth{\hfill SHIHOKO ISHII \hfill}{\hfill
 GEOMETRIC PROPERTIES OF JET SCHEMES  \hfill}

\begin{abstract}
This paper shows  how properties of jet schemes relate to those of
the singularity on the base scheme. 
We will see that the jet scheme's properties of being $\bQ$-factorial, $\bQ$-Gorenstein, canonical, terminal and so on are  inherited by the base scheme.

\end{abstract}

{\footnote{partly supported by JSPS Grant-in-Aid  No.18340004 and No.19104001}}
{\footnote { 2000 Mathematics Subject Classification number: 14J17, 
14J10.}}

\section{Introduction}
\noindent
  When we are given a scheme $X$,  we can think of the $m$-jet scheme $X_m$ for every $m\in \bN$ associated to $X$.  
  This notion was introduced in \cite{nash} which posed the Nash problem (see 
  \cite{i-k} for one answer).
These jet schemes are objects encoding the nature of the scheme $X$ so that we can see a property of $X$ by looking at $X_m$
(see for example \cite{ein}, \cite{e-Mus}, \cite{em0}, \cite{i6}, \cite{must01}, \cite{must02}).
One typical example  is that of smoothness.
In \cite{i6}, we obtain the equivalence of the following:
\begin{enumerate}
\item
The scheme $X$ is smooth;
\item
$X_m$ is smooth for every $m\in \bN$;
\item
$X_m$ is smooth for some $m\in \bN$.
\end{enumerate}
It is natural to ask whether the same equivalences hold when we replace 
``smooth" by other properties.
In this paper we think of this problem.
We can see that  the direction $(1)\Rightarrow (2)$ or $(1)\Rightarrow (3)$ fails for many properties.
On the other hand, we can show that the implication $(3)\Rightarrow (1)$ i.e.,

``If $X_m$ has property (P) for some $m\in \bN$, then $X$ has property (P)" 

\noindent
holds for $(P)=$normal, locally complete intersection, $\bQ$-factorial, $\bQ$-Gorenstein, canonical, terminal, log-canonical, log-terminal.

We also consider the relation between a morphism of schemes $X\to Y$ and 
the correspoding morphism of jet schemes $X_m\to Y_m$.
It is proved in  \cite{i6} that a morphism $f:X\to Y$ is 
smooth (resp. unramified, \'etale) if and only if  the induced morphism 
$f_m:X_m\to Y_m$ is smooth (resp. unramified, \'etale) for some $m\in \bN$.
We consider whether the same statement holds for other properties of morphisms.
By a basic property of jet schemes, we obtain the same statement  for ``isomorphic"(see \cite{iw}).
On the other hand, about ``flat" we prove in this paper the  ``if" part and give  a counterexample for the ``only if" part.

The paper is organized as follows: In section 2, we state some basic properties on jet schemes, including the proofs for the readers' convenience.
In section 3, we show the implication:  a jet scheme $X_m$ satisfies (P)
$\Rightarrow$ the base scheme $X$ satisfies (P) for various properties (P).
In section 4, we show that if an induced morphism $f_m:X_m\to Y_m$ is flat for some $m\in \bN$, then $f:X\to Y$ is flat, but the converse does not hold.

In this paper a scheme is always defined over an algebraically closed field $k$.
In some theorems we assume  a  condition on the characteristic of $k$ according to the situation. 
When we say variety,  it means an irreducible reduced separated scheme of finite type over $k$.
Notation and terminologies on jet schemes are based on \cite{cr}.

The author would like to thank Professor Kei-ichi Watanabe for many useful comments and advices. 
She is also grateful to the members of Singularity Seminar at Nihon University for stimulating conversations and encouragements.
\section{Basics on jet schemes}

\noindent
Most of the statements in this section seem to be known by the people in the field of jet schemes.
But we give here the proofs for some of them, since we cannot find good references for them.

\begin{defn}
  Let \( X \) be a scheme of finite type over \( k \)
and $K\supset k$ a field extension.
  For \( m\in \bN \), a  \( k \)-morphism \( \sTm\to X \) is called an \( m \)-jet
  of \( X \). 
  We denote the unique point of \( \sTm \) by \( 0 \). 
\end{defn}

The space of $m$-jets (or $m$-jet scheme) of $X$, denoted by $X_m$, is a scheme of finite type over $k$,   and characterized as follows:

\begin{prop}
\label{existence}
  Let \( X \) be a scheme of finite type over \( k \).
  For every $k$-scheme $Z$ we have:
 \begin{equation}
 \label{basic}
 \Hom _{k}(Z, X_{m})\simeq\Hom _{k}(Z\times_{\spec k}
\stm, X).
\end{equation} 
 \end{prop}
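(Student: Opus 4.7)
The plan is to construct $X_m$ explicitly in the affine case and then extend the universal property to arbitrary $X$ by a gluing argument.

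First, suppose $X = \spec A$ with $A = k[x_1,\ldots,x_n]/(f_1,\ldots,f_r)$. For any $k$-algebra $R$, a morphism $\spec R \times_{\spec k} \stm \to X$ corresponds to a $k$-algebra homomorphism $A \to R[t]/(t^{m+1})$, which is determined by images $\xi_i = \sum_{s=0}^{m}\xi_i^{(s)} t^s$ satisfying $f_j(\xi_1,\ldots,\xi_n) \equiv 0 \pmod{t^{m+1}}$. Expanding
$$f_j(\xi_1,\ldots,\xi_n) \;=\; \sum_{l=0}^{m} F_j^{(l)}\!\bigl(\xi_i^{(s)}\bigr)\, t^l$$
produces universal polynomials $F_j^{(l)}$ in formal variables $x_i^{(s)}$ ($1\le i\le n$, $0\le s\le m$), and I would set
$$X_m \;:=\; \spec\, k\bigl[x_i^{(s)}\bigr]_{i,s}\, \big/\, \bigl(F_j^{(l)}\bigr)_{j,l}.$$
Then $\Hom_k(\spec R, X_m)$ is by definition the set of tuples $(\xi_i^{(s)})$ annihilated by all the $F_j^{(l)}$, which matches $\Hom_k(\spec R \times_{\spec k} \stm, X)$; the bijection is visibly functorial in $R$.

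Next I would upgrade the identification to arbitrary, not necessarily affine, test schemes $Z$. Both functors $Z\mapsto\Hom_k(Z, X_m)$ and $Z\mapsto\Hom_k(Z\times_{\spec k}\stm, X)$ are Zariski sheaves on $Z$. The key observation on the right-hand side is that $Z\times_{\spec k}\stm$ has the same underlying topological space as $Z$, so any Zariski open cover of $Z$ yields a Zariski open cover of $Z\times_{\spec k}\stm$. Hence the affine case already established implies the general case in $Z$.

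For a non-affine $X$, I would choose an affine open cover $\{U_\alpha\}$ and apply the affine construction to each $U_\alpha$. The gluing relies on the same topological observation: for any open $V\subset X$, a morphism $Z\times_{\spec k}\stm\to X$ factors through $V$ if and only if its reduction $Z\to X$ does. This identifies $(U_\alpha\cap U_\beta)_m$ as an open subscheme of both $(U_\alpha)_m$ and $(U_\beta)_m$ in a canonical way, so the pieces glue into a scheme $X_m$, and the universal property of the $(U_\alpha)_m$ assembles into that of $X_m$ by the sheaf argument of the previous paragraph.

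The main obstacle is the topological observation used for gluing, namely that openness of an $m$-jet is detected by its reduction modulo $t$; the verification is immediate from the fact that $\stm$ has a single point, but it is this fact which makes both the gluing and its compatibility with the universal property go through formally.
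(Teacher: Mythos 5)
Your proof is correct, but there is nothing in the paper to compare it against: the paper states Proposition \ref{existence} without proof, as the defining characterization of $X_m$ (it is one of the ``known'' basics, with \cite{cr} as the background reference), so you have in effect supplied the missing standard argument. What you wrote is exactly that standard representability proof: in the affine case, expanding $f_j\bigl(\sum_s \xi_1^{(s)}t^s,\ldots,\sum_s \xi_n^{(s)}t^s\bigr)$ in powers of $t$ yields the universal polynomials $F_j^{(l)}$ cutting out $X_m$ inside $\bA^{(m+1)n}$, and the bijection with $\Hom_k(\spec R\times_{\spec k}\stm, X)$ is functorial in $R$ by construction. Your key observation --- that $Z\times_{\spec k}\stm\to Z$ is a homeomorphism because $(t)$ is nilpotent, so a jet factors through an open $V\subset X$ if and only if its reduction does --- is precisely what makes the two gluing steps work: it shows $V_m=\pi_m^{-1}(V)$ represents the jet functor of $V$ (so the identifications of $(U_\alpha\cap U_\beta)_m$ inside $(U_\alpha)_m$ and $(U_\beta)_m$ are canonical and the cocycle condition is automatic, both representing the same functor), and it produces, for an arbitrary jet $Z\times_{\spec k}\stm\to X$, the open cover of $Z$ by preimages of the $U_\alpha$ under the reduction, over which the classifying maps glue. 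The one half-line worth making explicit is that the underlying point map of the reduction of a jet agrees with $\pi_m$ composed with its classifying map; in your affine model this is immediate, since $\pi_m^*$ sends $x_i$ to $x_i^{(0)}$ while the reduction of the jet $\xi_i=\sum_s\xi_i^{(s)}t^s$ is $x_i\mapsto\xi_i^{(0)}$.
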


In particular, for $Z=X_m$ the morphism 
$$\Lambda: X_m\times_{\spec k}
 \stm   \to  X$$ 
 corresponding to $id_{X_m}$ by the above bijection is called the {\it universal family} of $m$-jets of $X$.

For integers $m'>m\geq 0$, the canonical surjection 
$k[t]/(t^{m'+1})\to \tm$ induces the {\it truncation morphism }
$\psi_{m',m}:X_{m'} \to X_m$.
In particular for $m=0$ we denote $\psi_{m'0}$ by $\pi_{m'}$.
For every $m\in \bN$, the canonical injection $k \hookrightarrow \tm$
induces a morphism $\sigma_m:X\to X_m$.
As the injection $k \hookrightarrow \tm$
is a section of the canonical surjection $\tm\to k$,
$\sigma_m$ is a section of $\pi_{m}$.
When we should clarify the scheme $X$, we write $\psi_{m',m}^X, \pi_m^X$ and $\sigma_m ^X$.

For a morphism $f:X\to Y$ of schemes of finite type over $k$,
the induced morphism on the $m$-jet schemes is denoted by $f_m:X_m\to Y_m$.
Note that for every $\alpha \in X_m$, $f_m(\alpha)$ corresponds to the 
$m$-jet $f\circ \alpha$ of $Y$ by the bijection (\ref{basic}).

\begin{say}
   Consider \( G:=\bA^1\setminus \{0\}=\spec k[s,s^{-1}] \) as a multiplicative 
  group scheme. 
  For \( m\in \bN \), the morphism \( \tm \to 
  k[s,s^{-1}, t]/(t^{m+1}) \) defined by \( t\mapsto s t \) 
  gives an action  \[ \mu_{m}: {G} \times_{\spec k} \stm\to \stm\] of 
  \(G \) on \( \stm \).
  Therefore, it gives an action 
  \[ \mu_{Xm}:G\times_{\spec k}X_{m}\to X_{m} \]
  of \(G\) on \( X_{m} \).
\end{say}

By this action we have an $\o_X$-graded algebra  $ \oplus_{i\geq 0} \R_i$ with 
 $ \R_0=\o_X$ such that
  $$X_m=\spec \oplus_{i\geq 0} \R_i.$$

\begin{lem}
\label{cat}
  For every $m\in \bN$, the base scheme $X$ is the categorical quotient of $X_m$ by the action of $G$.
\end{lem}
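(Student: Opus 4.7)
The plan is to use the description $X_m=\underline{\spec}_X\bigoplus_{i\ge 0}\mathcal{R}_i$ recalled just before the lemma and to identify the $G$-action with the grading: concretely, the comodule structure $\bigoplus_i\mathcal{R}_i\to(\bigoplus_i\mathcal{R}_i)\otimes_k k[s,s^{-1}]$ sends $r\in\mathcal{R}_i$ to $r\otimes s^i$, and its $G$-invariants are exactly the degree-zero part $\mathcal{R}_0=\mathcal{O}_X$. From this, $G$-invariance of $\pi_m$ is immediate: $\pi_m^\sharp$ is the inclusion $\mathcal{O}_X=\mathcal{R}_0\hookrightarrow\bigoplus_i\mathcal{R}_i$, which is the inclusion of the invariants. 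Moreover, any factorization $\phi=\psi\circ\pi_m$ is unique, since composing with the section $\sigma_m$ forces $\psi=\phi\circ\sigma_m$.

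I would then dispatch the universal property affine-locally. If $X=\spec A$ and $Z=\spec C$ are both affine, write $X_m=\spec B$ with $B=\bigoplus_i\mathcal{R}_i$ and $\mathcal{R}_0=A$. A $G$-invariant morphism $\phi:X_m\to Z$ corresponds to a $k$-algebra map $\phi^\sharp:C\to B$ whose image is $G$-invariant, hence lies in $\mathcal{R}_0=A\subset B$; so $\phi^\sharp$ factors through $A\hookrightarrow B$, producing $\psi:X\to Z$ with $\phi=\psi\circ\pi_m$. This $\psi$ coincides with $\phi\circ\sigma_m$, as already noted.

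For general $X$ and $Z$, the task is to cover them by affine opens and patch together the affine-case factorizations. The main obstacle is arranging this cover compatibly: given $x\in X$, I can pick an affine $V\subset Z$ containing $\phi(\sigma_m(x))$ and an affine $U\ni x$ with $\sigma_m(U)\subset\phi^{-1}(V)$ (available since $\sigma_m^{-1}(\phi^{-1}(V))$ is open in $X$), but I need the stronger inclusion $\pi_m^{-1}(U)\subset\phi^{-1}(V)$ in order to restrict $\phi$ to a morphism $U_m\to V$ and apply the affine case.

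To achieve this, I would extend the $G$-action to an action of the multiplicative monoid $\mathbb{A}^1$ on $X_m$ via the ring map $\bigoplus_i\mathcal{R}_i\to(\bigoplus_i\mathcal{R}_i)\otimes_k k[s]$, $r\in\mathcal{R}_i\mapsto r\otimes s^i$; the resulting morphism $\bar\mu:\mathbb{A}^1\times X_m\to X_m$ restricts to $\mathrm{id}_{X_m}$ at $s=1$ and to $\sigma_m\circ\pi_m$ at $s=0$. For $\alpha\in\pi_m^{-1}(U)$, the orbit map $s\mapsto\bar\mu(s,\alpha)$ sends $0$ to $\sigma_m(\pi_m(\alpha))\in\sigma_m(U)\subset\phi^{-1}(V)$; openness of $\phi^{-1}(V)$ forces its preimage under the orbit map to contain an open neighborhood of $0\in\mathbb{A}^1$, hence some $s_0\in G$, so $s_0\cdot\alpha\in\phi^{-1}(V)$, and $G$-invariance of $\phi^{-1}(V)$ gives $\alpha\in\phi^{-1}(V)$. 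The resulting local factorizations agree on overlaps (each equals $\phi\circ\sigma_m$ on the relevant affine of $X$), and so glue to a global $\psi:X\to Z$ with $\phi=\psi\circ\pi_m$.
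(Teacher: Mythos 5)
Your proof is correct, and it follows the same overall strategy as the paper --- both identify the candidate factorization as $\chi=\phi\circ\sigma_m$ and use that $\sigma_m$ is a section of $\pi_m$ --- but you supply substantially more than the paper does. The paper's proof consists of drawing the two diagrams and then simply asserting that the triangle $\phi=\chi\circ\pi_m$ commutes; that assertion is precisely the nontrivial content of the lemma, and the paper gives no justification for it. You prove it: first affine-locally, by translating $G$-invariance through the grading (the coaction sends $r\in\mathcal{R}_i$ to $r\otimes s^i$, so an invariant $\phi^\sharp$ lands in $\mathcal{R}_0=\mathcal{O}_X$), and then globally, by extending the $\bG_m$-action to an $\bA^1$-monoid action with $\bar\mu(0,-)=\sigma_m\circ\pi_m$ in order to force $\pi_m^{-1}(U)\subset\phi^{-1}(V)$, which is exactly the compatibility needed to reduce to the affine case. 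This care is not pedantry: since $X_m$ can be non-reduced (the paper's own Example 3.1, $xy=0$), a casual argument that two morphisms agreeing on the dense locus $G\times X_m\subset\bA^1\times X_m$ agree at $s=0$ would be illegitimate, whereas your ring-level degree argument is insensitive to nilpotents. The only phrase worth tightening is ``hence some $s_0\in G$'' in the patching step --- one should note that the relevant open subset of $\bA^1_{\kappa(\alpha)}$ contains a nonzero point because $\kappa(\alpha)\supset k$ is infinite (or simply take the generic point), and that $G$-stability of $\phi^{-1}(V)$ follows from $\mu^{-1}(\phi^{-1}(V))=p_2^{-1}(\phi^{-1}(V))$ --- but these are routine completions of an argument that is sound as written.
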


\begin{proof} We refere to \cite{git} for the definition of  categorical quotient.
It is clear that there is a commutative diagram:
$$\begin{array}{ccc}
G\times X_m & \stackrel{\mu_{X_m}}\longrightarrow & X_m\\
p_2\downarrow\ \ \ \ &  &\ \ \ \downarrow \pi_m\\
X_m& \stackrel{\pi_m}\longrightarrow& X\\
\end{array}$$

If we are given a commutative diagram:
$$\begin{array}{ccc}
G\times X_m & \stackrel{\mu_{X_m}}\longrightarrow & X_m\\
p_2\downarrow\ \ \ \ &  &\ \ \ \downarrow \phi\\
X_m& \stackrel{\phi}\longrightarrow& Z,\\
\end{array}$$
let $\chi :X\to Z$ be the composite $\phi\circ \sigma_m: X\to X_m\to Z$. 
Then, we obtain the commutative diagram:
$$\begin{array}{ccc}
X_m& \stackrel{\phi}\longrightarrow & Z\\
\pi_m \downarrow\ \ \ \ & \ \ \ \ \ \nearrow  \chi & \\
X. & & \\
\end{array}$$

\end{proof}

\begin{say}
For $X=\bA^N=\spec [x_1,...,x_N]$, the $m$-jet scheme of $X$ is 
$$X_m=\spec k[x_1^{(0)},...,x_N^{(0)}, x_1^{(1)},...,x_N^{(1)},.....,x_1^{(m)},...,x_N^{(m)}]$$
with $\deg x_i^{(j)}= j$.
We  note that the universal family $\Lambda: X_m\times_{\spec k} \stm \to X$ is associated to a ring homomorphism:
$$\Lambda^*(x_i)=x_i^{(0)}+x_i^{(1)}t+\cdots +x_i^{(m)}t^m.$$
Therefore, under the ring homomorphism $\pi_m^*$  corresponding to the canonical projection $\pi_m:X_m\to X$, 
$x_i$ corresponds to $x_i^{(0)}$ for $i=1,\ldots , N$.

For a $k$-morphism $\varphi: X\to Y$ of schemes, the induced morphism
$f_m:X_m\to Y_m$ of $m$-jet schemes is $G$-equivariant.
In particular for a $k$-morphism $f:X \to Y$ of affine schemes, 
the induced ring homomorphism $f_m^*: \Gamma(Y_m, \o_{Y_m}) \to \Gamma(X_m, \o_{X_m})$  is a homomorphism of graded rings.
Therefore, for a closed subscheme $X\subset \bA^N$, the closed subscheme $X_m\subset \bA^N_m$  is defined by a homogeneous 
ideal in $k[x_1^{(0)},...,x_N^{(0)}, x_1^{(1)},...,x_N^{(1)},.....,x_1^{(m)},...,x_N^{(m)}].$

\end{say}

\begin{prop}
\label{differential}
  Let $X$ be a non-singular variety.
  Then, it follows  $$\Omega_{X_m/X_{m-1}}\simeq \pi_m^*(\Omega_{X/k}).$$
\end{prop}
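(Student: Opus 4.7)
The plan is to verify the isomorphism first for $X=\bA^N$ by a direct coordinate computation and then reduce the general non-singular case to affine space by \'etale descent. For $X=\bA^N$, the coordinate description already recorded gives $X_m=\spec B$ with $B=k[x_i^{(j)}:1\leq i\leq N,\,0\leq j\leq m]$ and $X_{m-1}=\spec A$ with $A=k[x_i^{(j)}:1\leq i\leq N,\,0\leq j\leq m-1]$, and the truncation $\psi_{m,m-1}$ corresponds to the inclusion $A\hookrightarrow B=A[x_1^{(m)},\ldots,x_N^{(m)}]$, which exhibits $B$ as a polynomial extension of $A$. Consequently $\Omega_{B/A}$ is free on $dx_1^{(m)},\ldots,dx_N^{(m)}$, whereas $\pi_m^*(\Omega_{X/k})$ is free on $\pi_m^*(dx_1),\ldots,\pi_m^*(dx_N)$; sending $dx_i^{(m)}\mapsto\pi_m^*(dx_i)$ yields the desired isomorphism.

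For a general non-singular variety $X$ of dimension $N$, I would cover $X$ by Zariski open subsets $U$ admitting \'etale morphisms $\varphi:U\to\bA^N$. A direct application of the universal property (\ref{basic}) together with the \'etale lifting property shows that jet schemes are compatible with \'etale base change on the target, so the canonical map $U_\ell\to U\times_{\bA^N}(\bA^N)_\ell$ is an isomorphism for every $\ell$. Applying this at both levels $\ell=m$ and $\ell=m-1$ and comparing, one obtains that the truncation square
$$
\begin{array}{ccc}
U_m & \longrightarrow & (\bA^N)_m \\
\psi_{m,m-1}^U\downarrow & & \downarrow\psi_{m,m-1}^{\bA^N} \\
U_{m-1} & \longrightarrow & (\bA^N)_{m-1}
\end{array}
$$
is Cartesian with \'etale horizontal arrows. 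Base change for K\"ahler differentials then identifies $\Omega_{U_m/U_{m-1}}$ with the pullback of $\Omega_{(\bA^N)_m/(\bA^N)_{m-1}}$ along $U_m\to(\bA^N)_m$, and combining with the affine-space case together with the identification $\varphi^*\Omega_{\bA^N/k}\simeq\Omega_{U/k}$ (which is valid because $\varphi$ is \'etale) produces the desired isomorphism on $U$. The local isomorphisms then patch over a cover of $X$ to give the global statement.

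The main obstacle is the \'etale descent step, specifically verifying that the truncation square above is Cartesian; this amounts to showing that the jet-scheme construction is compatible with \'etale base change at two consecutive truncation levels simultaneously, which follows from (\ref{basic}) together with the \'etale lifting property but should be set up carefully. Once this compatibility is in hand, the proposition reduces formally to the $\bA^N$ calculation and standard properties of K\"ahler differentials.
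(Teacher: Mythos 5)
Your route is genuinely different from the paper's, which disposes of the proposition in two lines by citing the known torsor structure: for non-singular $X$, the truncation $\psi_{m,m-1}:X_m\to X_{m-1}$ is a torsor under the vector bundle $\spec S(\pi_{m-1}^*\Omega_{X/k})\to X_{m-1}$, and a torsor $\psi:V\to Y$ under $\spec S(E)$ satisfies $\Omega_{V/Y}\simeq \psi^*E$, whence $\Omega_{X_m/X_{m-1}}\simeq \psi_{m,m-1}^*\pi_{m-1}^*\Omega_{X/k}=\pi_m^*\Omega_{X/k}$. Your explicit computation on $\bA^N$ is correct, and so is the \'etale base-change step you flag as the main obstacle: the bijection (\ref{basic}) together with unique lifting along the nilpotent thickening $Z\hookrightarrow Z\times_{\spec k}\stm$ (filtering by powers of $t$, since \'etale maps are formally \'etale) gives $U_\ell\simeq U\times_{\bA^N}(\bA^N)_\ell$ for all $\ell$, which makes the truncation square Cartesian with \'etale horizontal arrows, and base change for relative differentials needs no flatness hypothesis. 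In effect you are re-proving, \'etale-locally, exactly the torsor triviality that the paper cites, so that part of your worry is the routine part.

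The genuine gap is the final sentence, ``the local isomorphisms then patch.'' The isomorphism you build on $U$ depends on the chosen \'etale chart $\varphi$ (it sends $dx_i^{(m)}\mapsto\pi_m^*dx_i$ for those particular coordinates), and two sheaves that are locally isomorphic need not be globally isomorphic --- a nontrivial line bundle is locally isomorphic to $\o_{X_m}$ --- so you must either verify agreement of the chart isomorphisms on overlaps or, better, produce one canonical global map and use your local computation only to check it is an isomorphism. Such a map exists: for a local function $f$ on $X$, let $f^{(m)}$ be the coefficient of $t^m$ of $f$ composed with the universal jet; from $(fg)^{(m)}=\sum_{i+j=m}f^{(i)}g^{(j)}$ and the fact that $f^{(i)}$ for $i<m$ is pulled back from $X_{m-1}$ (so its differential dies in $\Omega_{X_m/X_{m-1}}$), the assignment $f\mapsto d(f^{(m)})$ is a $k$-derivation from $\o_X$ into $(\pi_m)_*\Omega_{X_m/X_{m-1}}$, hence induces a canonical $\o_{X_m}$-linear map $\pi_m^*\Omega_{X/k}\to\Omega_{X_m/X_{m-1}}$. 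In \'etale coordinates the chain rule gives $d(f^{(m)})=\sum_i \pi_m^*(\partial f/\partial x_i)\,dx_i^{(m)}$ modulo differentials coming from $X_{m-1}$, so this canonical map agrees with (the inverse of) your chart-dependent one; your $\bA^N$ computation plus base change then shows it is locally, hence globally, an isomorphism. With that insertion your argument is complete, and the global isomorphism is what the paper actually needs later (e.g.\ for Corollary \ref{canonical} and Theorem \ref{Qgor}).
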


\begin{proof} 
It is known that the truncation morphism $\psi_{m,m-1}:X_m\to X_{m-1}$ is a torsor by the natural action of the vector bundle \\
$\spec S(\pi_{m-1}^*\Omega_{X/k})\to X_{m-1}$, where $S(\pi_{m-1}^*\Omega_{X/k})$ is the symmetric algebra over $\o_{X_{m-1}}$ generated by $\pi_{m-1}^*\Omega_{X/k}$, where the action is defined by the addition in the fibers.
In general, a torsor $\psi: V\to Y$ with the action of the vector bundle 
$\spec S(E)\to Y$, for a locally free sheaf $E$ on $Y$,  has the relative differential 
$$\Omega_{V/Y}\simeq \psi^*E.$$
By this, in our case
we obtain that $\Omega_{X_m/X_{m-1}}\simeq \pi_m^*(\Omega_{X/k})$.
\end{proof}

\begin{cor}
\label{canonical}
  Let $X$ be a non-singular variety.
  Then, for $m\in \bN$, the canonical sheaves of $X$ and $X_m$ are related by   $$\omega_{X_m}\simeq \pi_m^*(\omega_X^{\otimes (m+1)}).$$
\end{cor}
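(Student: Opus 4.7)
The plan is to proceed by induction on $m$, using Proposition \ref{differential} together with the relative cotangent sequence for the truncation morphism $\psi_{m,m-1}\colon X_m\to X_{m-1}$.

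For the base case $m=0$, we have $X_0=X$ and $\pi_0=\mathrm{id}_X$, so the asserted isomorphism $\omega_{X_0}\simeq\omega_X$ is a tautology. Assume now the result for $m-1$, that is, $\omega_{X_{m-1}}\simeq\pi_{m-1}^*(\omega_X^{\otimes m})$.

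The key input is that the truncation $\psi_{m,m-1}$ is a torsor under a vector bundle (as recalled in the proof of Proposition \ref{differential}), hence smooth. Thus the relative cotangent sequence
\begin{equation*}
0\to\psi_{m,m-1}^*\Omega_{X_{m-1}/k}\to\Omega_{X_m/k}\to\Omega_{X_m/X_{m-1}}\to 0
\end{equation*}
is a short exact sequence of locally free sheaves on $X_m$ (noting that $X_{m-1}$ is smooth, so $\Omega_{X_{m-1}/k}$ is locally free; smoothness of all $X_m$ for $X$ smooth follows by induction from the torsor structure as well). Taking top exterior powers yields
\begin{equation*}
\omega_{X_m}\simeq\psi_{m,m-1}^*\omega_{X_{m-1}}\otimes\det\Omega_{X_m/X_{m-1}}.
\end{equation*}

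Now I substitute the two ingredients. By Proposition \ref{differential}, $\Omega_{X_m/X_{m-1}}\simeq\pi_m^*\Omega_{X/k}$, so $\det\Omega_{X_m/X_{m-1}}\simeq\pi_m^*\omega_X$. By the inductive hypothesis and the identity $\pi_{m-1}\circ\psi_{m,m-1}=\pi_m$,
\begin{equation*}
\psi_{m,m-1}^*\omega_{X_{m-1}}\simeq\psi_{m,m-1}^*\pi_{m-1}^*(\omega_X^{\otimes m})\simeq\pi_m^*(\omega_X^{\otimes m}).
\end{equation*}
Combining these gives $\omega_{X_m}\simeq\pi_m^*(\omega_X^{\otimes m})\otimes\pi_m^*\omega_X\simeq\pi_m^*(\omega_X^{\otimes(m+1)})$, completing the induction.

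The only potential subtlety is ensuring that the relative cotangent sequence is actually short exact on the left (not just right exact); this is where smoothness of $\psi_{m,m-1}$ — deduced from its being a vector-bundle torsor — is essential. Once that is granted, the proof is a clean bookkeeping with determinants.
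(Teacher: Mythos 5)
Your proof is correct and follows essentially the same route as the paper: the paper's own (very terse) proof invokes exactly the relative cotangent sequence for $\psi_{m,m-1}$ together with Proposition \ref{differential}, leaving the induction, the left-exactness (from smoothness of the torsor $\psi_{m,m-1}$), and the determinant bookkeeping implicit. You have simply supplied those details carefully; nothing differs in substance.
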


\begin{proof}
For every $m\in \bN$, we have an exact sequence:
$$0\to \psi^*_{m,m-1}\Omega_{X_{m-1}}\to \Omega_{X_m}\to
\Omega_{X_m/X_{m-1}}\to 0.$$
By this and Proposition \ref{differential}, we obtain the isomorphism.
\end{proof}

\section{Geometric properties from jet schemes to the base scheme}

As the $k$-scheme $X$ is the categorical quotient of $X_m$ for every $m\in \bN$ by the action of $G$ (Lemma \ref{cat}), we obtain by \cite{git} the following: 
$$\begin{array}{llllll}
\mbox{(i)}&X_m&\mbox{reduced} &  \Rightarrow  &  X  &\mbox{reduced}\\
\mbox{(ii)}& X_m  &\mbox{connected} &  \Rightarrow  &  X  &\mbox{connected}\\ 
\mbox{(iii)}& X_m & \mbox{irreducible} &  \Rightarrow  &  X  &\mbox{irreducible}\\
\mbox{(iv)}& X_m & \mbox{locally\  integral}&  \Rightarrow  &  X &\mbox{locally\ integral}\\
\mbox{(v)}& X_m  &\mbox{locally\ integral} &  \Rightarrow  &  X  &\mbox{locally\ integral}\\
 && \mbox{and\ normal} &     && \mbox{and\ normal}\\
\end{array}$$

\begin{exmp}
 The converse of (i) does not hold in general.
 We give here an example in \cite{karen}. 
 Let $X$ be defined by $xy=0$ in $\bA_\bC^2$.
 Then, $X$ itself is reduced but $X_m$ is not reduced for any $m\in \bN$.
 Indeed, let  $I_m$ be the defining ideal of $X_m$ in $(\bA_\bC^2)_m$.
 Then $I_m$ is a homogeneous ideal of $ \bC[x^{(0)},y^{(0)},x^{(1)},y^{(1)},\ldots, x^{(m)}, y^{(m)}]$.
The degree 0 part of $I_m$ is generated by $x^{(0)}y^{(0)}$ and the part of degree 1 is 
generated by $x^{(0)}y^{(1)}+ x^{(1)} y^{(0)}$ as  $\bC[x^{(0)},y^{(0)}]$-modules.
Then, $f=x^{(0)}y^{(1)}\not\in I_m$, but $f^2\in I_m$. 

\end{exmp}

\begin{rem} About (ii), we have the converse statement:
If $X$ is connected, then $X_m$ is connected for every $m\in \bN$.
This can be seen as follows:
Let $P\in X_m$ be any point and let $x=\pi_m(P)$.
Then, the orbit $O_G(P)$ of $P$ by the action of $G$ is  irreducible and 
the closure $\overline{O_G(P)}$ contains $\sigma_m(x)$.
Thus, every point of $X_m$ is connected to  the section 
$\sigma_m(X)$ by an irreducible curve.
Since $\sigma_m(X)$ is connected, $X_m$ is connected.
\end{rem}

\begin{exmp}
\label{curve}
  The converse of (iii) (iv) does not hold in general.
  For example, let $X\subset \bA_\bC^3$ be a curve defined by $x^3-y^2=x^2-z^3=0$.
  Then, the main component $\overline{\pi_m^{-1}(X_{reg})}$ of $X_m$ has dimension $m+1$.
 Here, $X_{reg} $ is the open subset consisting of non-singular points of $X$.
  On the other hand, it follows $\dim \pi_m^{-1}(0)\geq m+2$ from the fact that 
  $\pi_m^{-1}(0)$ is defined by $2m-2$ equations in $(\pi_m^{\bA^3})^{-1}(0)=\bA_\bC^{3m}$.
This shows that $X_m$ is not irreducible for any $m\in \bN$.  
As $X_m$ is connected, it also shows that $X_m$ is not locally integral for $m\in \bN$.

\end{exmp}

\begin{exmp} 
\label{A1}
   The converse of (v) does not hold in general.
   For example,
   let $X$ be a normal surface defined by $x^2+y^2+z^2=0$ in $\bA_\bC^3$.
   It has an $A_1$-singularity at the origin. 
   Then, $X_m$ is irreducible by \cite{must01} but not normal for any $m\in \bN$.
   Indeed, it is known that $X_m$ is of dimension $2(m+1)$ for every $m\in \bN$.
   On the other hand, we can see that $\dim \sing(X_m)=\dim \pi_m^{-1}(0)=2m+1$, which shows that $X_m$ is not normal.

\end{exmp}

Next we will think of further properties.

\begin{thm}
\label{lci}
If $X_m$ is locally a complete intersection for an $m\in \bN$, 
then $X$ is locally a complete intersection.
\end{thm}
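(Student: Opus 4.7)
The plan is to reduce to a local statement at a point $x\in X$ with $p=\sigma_m(x)\in X_m$, and then deduce that $X$ is lci at $x$ by a dimension count combined with a careful analysis of the minimal number of generators of the defining ideal of $X_m$ at $p$. To that end, I would first choose a minimal local embedding $X\hookrightarrow \bA^N$ so that $N=\dim T_xX$ and every minimal generator $f_1,\ldots,f_r$ of the ideal $I$ of $X$ at $x$ lies in $\mathfrak{m}_x^2$. Using the identification $T_pX_m\cong (T_xX)^{m+1}$ (which follows from Proposition \ref{existence} applied to $Z=\spec k[\varepsilon]/(\varepsilon^2)$), the induced embedding $X_m\hookrightarrow \bA^N_m$ is minimal at $p$ as well, and $I_m$ is generated by the $r(m+1)$ elements $\{f_l^{(j)}\}$, all lying in $\mathfrak{M}_p^2$, where $\mathfrak{M}_p$ is the maximal ideal of $\o_{\bA^N_m,p}$.

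Next I would extract an upper bound on $\mu(I_m)_p$. Since $\sigma_m(X)$ lies in the main component $\overline{\pi_m^{-1}(X_{\mathrm{reg}})}$, which has dimension $(m+1)\dim_xX$, we obtain $\dim \o_{X_m,p}\geq (m+1)\dim_xX$. Combined with the assumption that $X_m$ is lci at $p$, this yields
\[
\mu(I_m)_p \;=\; N(m+1)-\dim \o_{X_m,p} \;\leq\; (m+1)(N-\dim_xX) \;=\; (m+1)c,
\]
where $c=\mathrm{ht}(I)_x$ is the codimension of $X$ in $\bA^N$ at $x$.

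The crux is the matching lower bound $\mu(I_m)_p\geq (m+1)r$. For this I would exploit the $G$-grading on $\o_{\bA^N_m}$ with $\deg x_i^{(j)}=j$: the ideal $I_m$ is homogeneous, the generator $f_l^{(j)}$ has weight $j$, and $\mathfrak{M}_pI_m$ is compatibly graded, so $I_m/\mathfrak{M}_pI_m$ is a graded $G$-module. Its weight-$0$ piece is $I/\mathfrak{m}_xI$ of dimension $r$; for each $d\in\{1,\ldots,m\}$ a direct computation shows that the $r$ classes $[f_l^{(d)}]$ span the weight-$d$ piece (all lower-weight generators of $I_m$ feeding into weight $d$ get absorbed in $\mathfrak{M}_pI_m$ because they carry a positive-weight factor), and by choosing minimal generators $f_l$ whose initial forms in $\mathrm{gr}_{\mathfrak{m}_x}\o_{\bA^N,x}$ minimally generate the initial ideal, a leading-term argument rules out any nontrivial $k$-linear syzygy between the $[f_l^{(d)}]$'s in each graded piece. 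Summing over $d=0,1,\ldots,m$ yields $\mu(I_m)_p\geq (m+1)r$.

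Combining the two bounds forces $(m+1)r\leq (m+1)c$, hence $r=c$ (since $r\geq c$ always holds in a regular ambient). In a regular local ring, an ideal whose minimal number of generators equals its height is generated by a regular sequence, so $I$ is a complete intersection at $x$ and $X$ is lci at $x$. The main obstacle is the lower bound on $\mu(I_m)_p$: one must verify that no nontrivial $k$-linear syzygy arises between the classes $[f_l^{(d)}]$ in each weight, which requires choosing minimal generators with well-behaved initial forms and tracking leading terms through the $G$-equivariant structure. The remaining ingredients---the minimal embedding, the tangent-space comparison, and the main-component dimension bound---are routine consequences of the material in Section 2.
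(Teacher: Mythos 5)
Your overall frame (localize at $p=\sigma_m(x)$ and play the minimal number of generators of the jet ideal off against a dimension count) resembles the paper's, but the step you yourself flag as the crux --- the lower bound $\mu(I_m)_p\geq (m+1)r$ --- is a genuine gap, and the bound is in fact false at the theorem's level of generality. The theorem carries no characteristic or reducedness hypothesis, and in characteristic $2$ the lci scheme $X=\spec k[x]/(x^2)$ has $f^{(1)}=2x^{(0)}x^{(1)}=0$, so $I_1=\left((x^{(0)})^2\right)$ and $\mu(I_1)_p=1<2=(m+1)r$, even though $X_1$ is lci; run on this example your argument derives a contradiction from a true hypothesis. Even in characteristic zero the independence of the classes $[f_l^{(d)}]$ is not established by your sketch. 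Note that what the grading gives for free is the \emph{opposite} inequality $\mu(I_m)_p\leq (m+1)r$: your ``absorption'' remark shows precisely that the weight-$d$ piece of $I_m/\mathfrak{M}_pI_m$ is \emph{spanned} by the $r$ classes $[f_l^{(d)}]$. Independence in weight $d$ amounts to injectivity of $I/\mathfrak{m}_xI\to (I_m/\mathfrak{M}_pI_m)_d$, $f\mapsto [f^{(d)}]$; already for $d=1$ this is the nontrivial assertion that no minimal generator $g$ of $I$ satisfies $\nabla g\in \mathfrak{m}_x\langle\nabla f_1,\dots,\nabla f_r\rangle + I\cdot\o_{\bA^N,x}^{\oplus N}$. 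Your leading-term device does not settle this: initial forms of minimal generators of different orders can be $k$-linearly dependent, and one cannot in general choose $r$ minimal generators whose initial forms generate the initial ideal, since $\mu(\mathrm{in}(I))$ may exceed $\mu(I)$. (A second, smaller issue: your upper bound uses $\sigma_m(x)\in\overline{\pi_m^{-1}(X_{reg})}$, which presumes $X$ reduced and irreducible near $x$; this is repairable by passing to $X_{red}$ and a top-dimensional component through $x$, but it is unaddressed.)

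The paper's proof shows that none of this is needed: only the weight-$0$ part of your independence claim is used, and that part is free. Working in $R_0=\widehat\o_{\bA^N,x}\subset R=\widehat\o_{\bA^N_m,P}$ with $I\subset R_0$, $J\subset R$ the ideals of $X$ and $X_m$, the retraction $R\to R_0$ killing the positive-weight variables maps $J$ onto $I$ (each $f^{(j)}$, $j\geq 1$, dies, and $f^{(0)}\mapsto f$), so $I$ is a direct summand of $J$ and $I\otimes_Rk(P)\hookrightarrow J\otimes_Rk(P)$; by Nakayama, minimal generators $f_1,\dots,f_r$ of $I$ extend to a minimal generating set $f_1,\dots,f_s$ of $J$. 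The lci hypothesis makes $f_1,\dots,f_s$ a regular sequence, hence $\dim R/(f_1,\dots,f_r)=N(m+1)-r$, and since $f_1,\dots,f_r$ involve only the degree-$0$ variables, $R/(f_1,\dots,f_r)\cong \left(R_0/I\right)[[x_1^{(1)},\dots,x_N^{(m)}]]$, giving $\dim R_0/I=N-r$, i.e.\ $I$ is generated by $\mathrm{ht}(I)$ elements. This replaces your hard (and generally false) equality $\mu(I_m)_p=(m+1)r$ and your main-component dimension estimate by the single soft fact that the weight-$0$ generators split off, and it is characteristic-free.
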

\begin{proof} First of all, we note that locally a complete intersection is an intrinsic property and independent of choice of smooth ambient spaces.
Since a problem is local, we may assume that $X$ is affine and $X\subset \bA^N$.
Then $X_m\subset \bA_m^N\simeq \bA^{(m+1)N}.$
Let $x\in X$ be any closed point.
Then it is sufficient to prove that if $X_m$ is a complete intersection at $P=\sigma_m(x)$, then $X$ is a complete intersection at $x$.
Let $\widehat\o_{\bA^N,x}$ and $\widehat\o_ {\bA _m^N, P}$ be $R_0$ and $R$, respectively.
Let $I\subset R_0$ and $J\subset R$
be the defining ideals of $X$ and $X_m$, respectively.
Then, $I$ and $R_0$ are direct summands of $J$ and $R$, 
respectively.
Here, the $R_0$-ideal $I$ is regarded as an $R$-ideal in the canonical way. 
By this, it follows $I\otimes_{R_0}k(x)=I\otimes_Rk(P)$.
As $I$ is a direct summand of $J$ we have an injection of $k$-vector spaces of dimension $r$ and $s$ ($r<s)$, respectively:
$$I\otimes_Rk(P)\hookrightarrow J\otimes_Rk(P).$$
By Nakayama's lemma we have elements $f_1,.., f_r\in R_0$ and $f_{r+1},.., f_s\in R$  such that
$$I=(f_1,\ldots, f_r), \ \ J=(f_1,\ldots, f_s).$$
As $X_m$ is a complete intersection at $P$, we have the equality of Krull dimensions:
$$\dim \widehat\o_{X_m,P}=\dim R/(f_1,\ldots, f_s)=N(m+1)-s,$$
since $s$ is the minimal number of generators of $J$ and $X_m$ is 
a complete intersection at $P$.
Therefore, $\dim R/(f_1,.., f_r)=N(m+1)-r$.
Since $ R/(f_1,.., f_r)=k[[x_1^{(0)},\ldots, x_N^{(m)}]]/
(f_1.., f_r)=\left(k[[x_1^{(0)},.., x_N^{(0)}]]/(f_1,.., f_r)
\right)[[x_1^{(1)},.., x_N^{(m)}]]$
we have
$$\dim R_0/I=\dim k[[x_1^{(0)},.., x_N^{(0)}]]/(f_1,.., f_r)=N-r$$
which shows that $X$ is a complete intersection at $x$.
\end{proof}

\begin{exmp}
If $X$ is  locally a complete intersection, then $X_m$ is not necessarily  locally a complete intersection.
Example \ref{curve} shows such an example. 
\end{exmp}

\begin{thm}
\label{Qfac}
If $X_m$ is a $\bQ$-factorial variety for an $m\in \bN$, 
then $X$ is $\bQ$-factorial.
\end{thm}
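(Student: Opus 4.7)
The plan is to lift a Weil divisor on $X$ to $X_m$, use $\bQ$-factoriality of $X_m$, and pull back via the section $\sigma_m:X\to X_m$. A $\bQ$-factorial variety is normal, so $X_m$ is a normal, irreducible variety with $\dim X_m=(m+1)\dim X$; by property (v) above, $X$ is then irreducible and normal too.

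Let $D$ be a prime Weil divisor on $X$; one must find $N\in\bN$ such that $ND$ is Cartier. Let $U\subset X$ be the smooth locus, so $\codim_X(X\setminus U)\geq 2$ and $D\cap U$ is a nonempty Cartier divisor on $U$. Define
$$\tilde D := \overline{\pi_m^{-1}(D\cap U)}\subset X_m.$$
Iterating Proposition \ref{differential} shows that $\pi_m^U:U_m\to U$ is smooth, so $\pi_m^{-1}(D\cap U)$ is irreducible of codimension one in the open dense subvariety $U_m\subset X_m$. Hence $\tilde D$ is a prime Weil divisor on $X_m$, and by $\bQ$-factoriality of $X_m$ there is $N>0$ such that $N\tilde D$ is Cartier.

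The Cartier pullback $\sigma_m^*(N\tilde D)$ is then a Cartier divisor on $X$. Since $\pi_m\circ \sigma_m=\mathrm{id}_X$ and $\tilde D|_{U_m}=\pi_m^*(D\cap U)$ as Cartier divisors on $U_m$, one has $\sigma_m^*(N\tilde D)|_U=ND|_U$. Because $X$ is normal and $X\setminus U$ has codimension at least two, two Weil divisors on $X$ that agree on $U$ are equal; hence the Weil divisor associated to $\sigma_m^*(N\tilde D)$ is exactly $ND$, which is therefore Cartier on $X$. As $D$ was arbitrary, $X$ is $\bQ$-factorial.

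The subtle point is the construction of $\tilde D$: Example \ref{A1} shows that $\pi_m^{-1}(\sing X)$ may itself be of codimension one in $X_m$, so one cannot take $\tilde D=\pi_m^{-1}(D)$ naively and still expect a prime divisor. Forming instead the closure of the preimage of the generic, smooth part of $D$ bypasses this pitfall, and the normality of $X$ (to ensure that $D$ actually meets the smooth locus) is the reason this maneuver works.
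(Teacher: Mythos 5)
Your proof is correct and follows essentially the same route as the paper: both form $\tilde D=\overline{\pi_m^{-1}(D\cap X_{reg})}$, invoke $\bQ$-factoriality of $X_m$ to make $N\tilde D$ Cartier, and transport it back to $X$ along the section $\sigma_m$ (the paper intersects with $\sigma_m(X)$ and verifies $(r\tilde D)\cap\sigma_m(X)=r(\tilde D\cap\sigma_m(X))$ by comparing on the regular locus, which is exactly your normality-plus-codimension-two Weil divisor comparison in different words). The only point worth adding is the one-line check that $\sigma_m(X)\not\subset \supp \tilde D$, needed for $\sigma_m^*(N\tilde D)$ to be defined as a Cartier divisor; it is immediate since $\tilde D\subseteq \pi_m^{-1}(D)$, so $\tilde D\cap\sigma_m(X)\subseteq \sigma_m(D)$, a proper closed subset of $\sigma_m(X)$.
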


\begin{proof} As $X_m$ is a $\bQ$-factorial variety, $X_m$ is normal and integral, therefore $X$ is normal and integral by the statement at the beginning of this section.
It is sufficient to prove that for a prime divisor $D\subset X$ there is an integer $r>0$ such that $rD$ is a Cartier divisor.
Since $X$ is normal, $D\cap X_{reg}\neq \emptyset$.
Let $\tilde D=\overline{\pi_m^{-1}(D\cap X_{reg})}\subset X_m$,
then by the assumption of the theorem there is an integer $r>0$ such that $r\tilde D$ is a Cartier divisor.
Then, the subscheme $\tilde D\cap \sigma_m(X)$ is isomoprhic to $D$ by the isomorphism $\pi_m|_{\sigma_m(X)}:\sigma_m(X)\simeq X$.
This follows from the following diagram, where all inclusions become equalities:
$$\begin{array}{ccccc}
\pi_m^{-1}(D)\cap \sigma_m(X)&=&\left(\pi_m|_{\sigma_m(X)}\right)^{-1}(D)&
\simeq & D\\
\bigcup&&&&\| \\
\tilde D\cap \sigma_m(X) & \supset &\overline{\left(\pi_m|_{\sigma_m(X)}\right)^{-1}(D\cap X_{reg})} & 
\simeq & D.\\
\end{array}$$
Therefore we have only to show that $\tilde D\cap \sigma_m(X)$ is a $\bQ$-Cartier divisor on $\sigma_m(X)$.
Now we have 
\begin{equation}\label{divisor}
(r\tilde D)\cap \sigma_m(X)=r(\tilde D\cap \sigma_m(X)),
\end{equation}
which follows from the fact that the both divisor coincide on an open subset $\sigma_m(X)_{reg}$ by 
$$(r\tilde D)\cap \sigma_m(X)_{reg}=((r\tilde D)\cap (X_{reg})_m)\cap \sigma_m(X)$$
$$=r(\tilde D\cap (X_{reg})_m)\cap\sigma_m(X) = r(\tilde D\cap \sigma_m(X)_{reg}),$$
where the last equality follows from the fact that $\tilde D\cap (X_{reg})_m$ is a Cartier divisor.
As $(r\tilde D)\cap \sigma_m(X)$ in (\ref{divisor}) is a Cartier divisor, it follows that 
$ r(\tilde D\cap \sigma_m(X))$ is a Cartier divisor on $\sigma_m(X)$.
\end{proof}

When we say that a scheme is $\bQ$-factorial, $\bQ$-Gorenstein, canonical, log-canonical, terminal, log-terminal, we always assume that the scheme is integral and normal.
Let any of these property be (P), then 

``if $X$ satisfies (P), then $X_m$ satisfies (P) "
\newline 
does not hold as we already have Example \ref{A1} that $X$ satisfies (P)  but $X_m$ is not even normal for any $m\in \bN$.

\begin{thm}
\label{Qgor}
If $X_m$ is $\bQ$-Gorenstein of index $r$ for an $m\in \bN$, 
then $X$ is $\bQ$-Gorenstein of index $\leq r(m+1)$.
\end{thm}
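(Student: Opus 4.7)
The plan parallels the proof of Theorem \ref{Qfac}, with Corollary \ref{canonical} replacing the divisor-pullback argument there. Since $X_m$ is $\bQ$-Gorenstein, it is normal and integral, so $X$ itself is normal and integral by the statements at the beginning of this section. It therefore suffices to exhibit an invertible sheaf on $X$ isomorphic to the reflexive hull $\omega_X^{[r(m+1)]}$, equivalently, to show that the Weil divisor $r(m+1)K_X$ is Cartier.

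The central step is to pull back $\omega_{X_m}^{[r]}$ (which is invertible by hypothesis) along the section $\sigma_m:X\to X_m$. Set $\mathcal L := \sigma_m^*\omega_{X_m}^{[r]}$; this is a line bundle on $X$. Now restrict to the smooth locus $U:=X_{reg}$. Then $U_m$ is an open subscheme of $X_m$, one has $\sigma_m(U)\subset U_m$, and $U_m$ is itself smooth. By Corollary \ref{canonical} applied to the nonsingular variety $U$,
$$\omega_{X_m}^{[r]}|_{U_m}\simeq \omega_{U_m}^{\otimes r}\simeq \pi_m^*\omega_U^{\otimes r(m+1)}.$$
Pulling back via $\sigma_m|_U:U\to U_m$ and using $\pi_m\circ\sigma_m = \mathrm{id}_X$, we obtain
$$\mathcal L|_U \simeq \omega_U^{\otimes r(m+1)}.$$

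Both $\mathcal L$ and $\omega_X^{[r(m+1)]}$ are reflexive (hence $S_2$) coherent sheaves on the normal variety $X$, and the display above exhibits an isomorphism between their restrictions to $U$, whose complement in $X$ has codimension $\geq 2$. Pushforward along the inclusion $U\hookrightarrow X$ therefore identifies the two sheaves on all of $X$, and $\omega_X^{[r(m+1)]}$ is invertible. This yields the desired bound on the Gorenstein index.

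The only real subtlety is a bookkeeping one: checking that Corollary \ref{canonical} (stated for smooth ambient schemes) can be applied on $U_m$, and that reflexive sheaves agreeing outside codimension $\geq 2$ on a normal variety agree globally. Both are standard consequences of the $S_2$ property of reflexive sheaves on normal schemes, so I do not anticipate a major obstacle beyond writing this out carefully.
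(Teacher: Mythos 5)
Your proof is correct and takes essentially the same route as the paper: both arguments restrict the index-$r$ pluricanonical object of $X_m$ along the section $\sigma_m$, identify it over $X_{reg}$ with the $r(m+1)$-st power of the canonical class via Corollary \ref{canonical} (using $\pi_m\circ\sigma_m=\mathrm{id}_X$), and then extend the identification across $X\setminus X_{reg}$, which has codimension $\geq 2$ in the normal variety $X$. The only difference is packaging: the paper reduces to $X$ affine, chooses an effective $D_0\in |r(m+1)K_X|$, writes $rK_{X_m}=D+F$ with $F$ supported over the singular locus, and restricts this Cartier divisor to $\sigma_m(X)$, whereas you pull back the invertible sheaf $\omega_{X_m}^{[r]}$ directly and conclude by reflexivity --- a slightly cleaner formulation of the same argument.
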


\begin{proof}
We may assume that $X$ is a normal affine variety.
By Proposition \ref{canonical}, 
$$K_{(X_{reg})_m}=(m+1)\pi_m^*(K_{X_{reg}}).$$
Take an effective divisor $D_0\in |r(m+1)K_X|$ and let 
$D=\overline{\pi_m^*(D_0|_{X_{reg}})}$.
Then,
$$rK_{X_m}=D+F, \ \ \  \supp F\cap (X_{reg})_m=\emptyset.$$
Since the Cartier divisor $D+F$ on $X_m$ does not contain the section
$\sigma_m(X)$ in its support, the restriction $(D+F)|_{\sigma_m(X)}$ can be defined and it is also a Cartier divisor.
By the definition of $F$, we have 
$$(D+F)|_{\sigma_m(X)_{reg}}=D|_{\sigma_m(X)_{reg}}.$$
On the other hand, we have that $D|_{\sigma_m(X)_{reg}}$ corresponds to $ D_0|_{X_{reg}}$
by the isomorphism $\sigma_m(X)\simeq X$.
Therefore, we have 
$(D+F)|_{\sigma_m(X)}$ correponds to $ D_0$
which shows that $D_0$ is a Cartier divisor.

\end{proof}

\begin{lem}
\label{diagram}
Assume the characteristic of $k$ is zero.
Let $f:Y\to X$ be a resolution of  the singularities of $X$.
For  $m\in \bN$, there is a commutative diagram:

\begin{equation}
\label{d}
\xymatrix{ \tilde {Y} \ar[rdd]_{\varphi} \ar[rrd]^{\psi}& & \\
& Y_m  \ar@{^{(}->}[lu] \ar[r]^{f_m} \ar[d]^{\pi_m^Y} & X_m \ar[d]^{\pi_m^X}\\
&Y \ar[r]_f & X\\
 }
\end{equation}
  such that $\tilde Y$ is nonsingular and containing $Y_m$ as an open subset and $\psi$ is proper and birational.
\end{lem}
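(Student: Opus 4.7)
The plan is to construct $\tilde Y$ by compactifying $Y_m$ over $X_m$ in a way that also retains the projection $\pi_m^Y:Y_m\to Y$, and then resolving singularities without disturbing $Y_m$. The key enabling observation is that $Y$ being nonsingular forces $Y_m$ to be nonsingular as well, because $\pi_m^Y$ is Zariski-locally a trivial affine bundle of rank $m\dim Y$. Thus, once a reasonable enclosing scheme is in hand, Hironaka's resolution (in characteristic zero) will leave $Y_m$ untouched.

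For the compactification, I would first apply Nagata's theorem to the separated finite-type morphism $f_m:Y_m\to X_m$ to obtain an open immersion $j:Y_m\hookrightarrow Z$ together with a proper morphism $\bar f:Z\to X_m$ satisfying $\bar f\circ j=f_m$. To incorporate $\pi_m^Y$, form the fiber product $Z\times_X Y$ (using $\pi_m^X\circ\bar f:Z\to X$ and $f:Y\to X$) and let $Z'$ be the scheme-theoretic closure in $Z\times_X Y$ of the image of the morphism $(j,\pi_m^Y):Y_m\to Z\times_X Y$; this morphism is well-defined by the commutativity $\pi_m^X\circ f_m=f\circ\pi_m^Y$, and it is a locally closed immersion, since it factors as the graph $Y_m\hookrightarrow Y_m\times Y$ (a closed immersion as $Y$ is separated), followed by the open immersion $j\times\mathrm{id}_Y$ and the closed immersion $Z\times_X Y\hookrightarrow Z\times Y$. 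Because $f$ is proper, so is the projection $Z\times_X Y\to Z$; hence $Z'\to Z$ is proper, and consequently $Z'\to X_m$ is proper. The image of $Y_m$ in $Z'$ is a locally closed, dense subset of the irreducible scheme $Z'$, hence open, so $Y_m\hookrightarrow Z'$ is an open immersion.

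Applying Hironaka's resolution of singularities to $Z'$ then produces a proper birational morphism $\tilde Y\to Z'$ that is an isomorphism over the smooth locus of $Z'$, and in particular over $Y_m$. Hence $\tilde Y$ is nonsingular and contains $Y_m$ as an open subset. I would define $\psi:\tilde Y\to X_m$ as the composite $\tilde Y\to Z'\to Z\to X_m$ and $\varphi:\tilde Y\to Y$ as $\tilde Y\to Z'\to Y$; commutativity of the full diagram holds tautologically on the dense open $Y_m\subset\tilde Y$ and therefore globally. The map $\psi$ is proper as a composite of proper morphisms, and it is birational because $\psi|_{Y_m}=f_m$ is an isomorphism onto the dense open $(X_{reg})_m$ of the main component of $X_m$ (since $f$ is an isomorphism over $X_{reg}$).

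The main obstacle I expect is verifying that $Y_m$ remains open inside the intermediate scheme $Z'$; this reduces to the general fact that a dense, locally closed, irreducible subscheme of an irreducible noetherian scheme is automatically open, combined with the graph-type argument showing $(j,\pi_m^Y)$ is a locally closed immersion. The rest of the argument is a routine combination of Nagata's compactification theorem and Hironaka's resolution, together with bookkeeping on the commutativity of the big diagram.
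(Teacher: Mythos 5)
Your proof is correct, and it follows the same overall architecture as the paper's---compactify $Y_m$, take the closure of a graph inside a fiber product over $X$, then resolve---but the compactification step is implemented by a genuinely different device, and in the opposite direction. The paper does not invoke Nagata's theorem: writing $Y_m=\spec\R$ with $\R=\oplus_{i\geq 0}\R_i$ the grading coming from the $G$-action, it completes $Y_m$ over $Y$ explicitly, embedding it as an open subscheme of $Y'=\Proj \oplus_{i\geq 0}S_iT^i$ with $S_i=\R_i\oplus\R_{i-1}\oplus\cdots\oplus\o_Y$ (the standard projective completion of a relative cone), so that $Y'\to Y$ is proper; it then sets $Y''=\overline{\Im(id_{Y_m},f_m)}\subset Y'\times_X X_m$, which is proper over $X_m$ because $Y'\to Y\to X$ is proper, and resolves $Y''$. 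You compactify the other leg, applying Nagata to $f_m\colon Y_m\to X_m$ and recovering $\pi_m^Y$ by closing up the graph inside $Z\times_X Y$, with properness over $X_m$ obtained from properness of $f$ by base change. The two constructions are mirror images: yours trades the elementary, jet-specific Proj construction for the heavier general compactification theorem, and in exchange is insensitive to the structure of $Y_m$ (it would work verbatim for any separated finite-type $X_m$-scheme equipped with a map to $Y$). You are also more careful than the paper on two points it leaves tacit but genuinely needs: that $Y_m$ is nonsingular (Zariski-locally a trivial affine bundle over the smooth $Y$), hence lies in the smooth locus of the compactification, so that Hironaka's resolution can be chosen to be an isomorphism over $Y_m$---without this, the assertion that $Y_m$ is open in $\tilde Y$ would not follow---and that the dense locally closed image of $Y_m$ is open in its closure $Z'$. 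One imprecision you share with the paper: birationality of $\psi$ onto all of $X_m$ requires $X_m$ to be irreducible, since $f_m$ is an isomorphism only over $(X_{reg})_m$, which is dense only in the main component; you flag this yourself with the phrase ``main component,'' and in the lemma's application in Theorem \ref{cano} the scheme $X_m$ is integral by the paper's standing convention, so nothing is lost.
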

\begin{proof}
  Let $Y_m=\spec \R$, where $\R$ is a graded $\o_Y$-algebra and 
  $\R=\oplus_{i\geq 0}\R_i$ the homogeneous decomposition.
  Let $Y'=\Proj \oplus_{i\geq 0}S_iT^i $ with $\deg T=1$ and
  $S_i=\R_i\oplus \R_{i-1}\cdots \oplus \o_Y$, then $Y_m$ is canonically contained in  $Y'$ as an open subscheme and there is a proper morphism $\varphi': Y'\to Y$. 
  Let $Y''=\overline{Im (id_{Y_m}, f_m)}\subset Y'\times_X X_m$ and let $g:\tilde Y\to Y''$ 
  be a resolution of the singularities of $Y''$.
Let $\varphi$ be the composite $\tilde Y\to Y''\to Y' \to Y $ and
 $\psi$ be the composite $\tilde Y \to Y''\to X_m$.
 Then the birationality and properness of $\psi$ follows from the birationality of $f_m$ and the properness of 
 $f$, respectively.
\end{proof}

\begin{thm}
\label{cano}
 Assume $char k=0$.
If $X_m$ has at worst canonical (resp. terminal, log-terminal) singularities for an $m\in \bN$, 
then $X$ has at worst canonical (resp. terminal, log-terminal) singularities.
\end{thm}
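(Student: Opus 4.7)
The plan is to transfer discrepancy information from the resolution $\psi:\tilde Y\to X_m$ furnished by Lemma~\ref{diagram} back to a log resolution $f:Y\to X$, using the section $\sigma_m^Y:Y\to Y_m\hookrightarrow\tilde Y$. Since the singularity classes at hand are by convention normal and $\bQ$-Gorenstein, statement~(v) at the beginning of this section together with Theorem~\ref{Qgor} forces $X$ to be normal and $\bQ$-Gorenstein, so the discrepancy divisor $K_Y-f^*K_X=\sum a_iE_i$ on a log resolution is well defined. Simultaneously, the hypothesis on $X_m$ translates to $K_{\tilde Y}-\psi^*K_{X_m}=\sum b_jF_j$ with $b_j\ge 0$ (canonical), $b_j>0$ (terminal), or $b_j>-1$ (log-terminal), summed over the $\psi$-exceptional primes $F_j$.

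The crux is to restrict this equation to the open $Y_m\subset\tilde Y$ (on which $\psi|_{Y_m}=f_m$) and apply $\sigma_m^{Y*}$. Corollary~\ref{canonical} yields $\sigma_m^{Y*}K_{Y_m}=(m+1)K_Y$; the analogous identity on $(X_{reg})_m$ extends by normality of $X$ (two $\bQ$-Cartier divisors agreeing outside a codimension-two subset are $\bQ$-linearly equivalent) to $\sigma_m^{X*}K_{X_m}=(m+1)K_X$. Combined with the diagram identities $f_m\circ\sigma_m^Y=\sigma_m^X\circ f$ and $\pi_m^Y\circ\sigma_m^Y=\mathrm{id}_Y$, one obtains the central equality
\[
(m+1)\sum a_iE_i\;=\;\sigma_m^{Y*}\bigl((K_{\tilde Y}-\psi^*K_{X_m})|_{Y_m}\bigr).
\]

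To extract the desired inequalities I would match components. Since $f_m$ is an isomorphism over $(X_{reg})_m=Y_m\setminus\bigcup_i\pi_m^{Y-1}(E_i)$, the right-hand side is supported on $\bigcup_i\pi_m^{Y-1}(E_i)$; each $\pi_m^{Y-1}(E_i)$ is irreducible (fibers of $\pi_m^Y$ being connected affine spaces) and pulls back to $E_i$ with multiplicity one under $\sigma_m^Y$. Thus $(m+1)a_i=b_{j(i)}$, where $F_{j(i)}:=\overline{\pi_m^{Y-1}(E_i)}\subset\tilde Y$. To apply the hypothesis on $b_{j(i)}$ one must know $F_{j(i)}$ is $\psi$-exceptional, which follows from the chain
\[
\psi(F_{j(i)})\subset\pi_m^{X-1}(f(E_i))\subset\pi_m^{X-1}(X_{sing})\subset\sing(X_m),
\]
whose last inclusion uses that any jet of $X$ based at a singular point is itself a singular point of $X_m$, together with $\sing(X_m)$ having codimension at least two by normality of $X_m$. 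The hypothesis on $b_{j(i)}$ then transfers to $a_i=b_{j(i)}/(m+1)$, giving $a_i\ge 0$, $a_i>0$, or $a_i>-1/(m+1)>-1$ in the three cases.

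The main obstacle is establishing two auxiliary facts: (i) the identity $\sigma_m^{X*}K_{X_m}=(m+1)K_X$ requires extending Corollary~\ref{canonical} beyond the smooth locus using normality of $X$; and (ii) the containment $\pi_m^{X-1}(X_{sing})\subset\sing(X_m)$ is verifiable through a direct Jacobian calculation, since the Jacobian matrix of the defining equations of $X_m$ is block lower-triangular with the Jacobian of $X$ at $\gamma(0)$ on the diagonal, so any rank defect at $\gamma(0)$ produces a rank defect at $\gamma$.
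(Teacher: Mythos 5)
Your overall architecture is the same as the paper's: invoke Theorem~\ref{Qgor} for the $\bQ$-Gorenstein property, take the diagram of Lemma~\ref{diagram}, write $K_{\tilde Y}-\psi^*K_{X_m}$ as a sum over $\psi$-exceptional primes, restrict to the open subset $Y_m$, pull back along the section using $K_{X_m}|_{\sigma_m(X)}=(m+1)K_X$ and $K_{Y_m}|_{\sigma_m(Y)}=(m+1)K_Y$, and read off the discrepancies of $X$ as $b_{j(i)}/(m+1)$; your sign bookkeeping in the three cases agrees with the paper's equality (\ref{coef}). You are in fact more careful than the paper at the one delicate point: the paper simply asserts that the $f_m$-exceptional divisors are of the form $(\pi_m^Y)^*(E_i)$, whereas you correctly identify that one must \emph{prove} that $\overline{\pi_m^{-1}(E_i)}\subset\tilde Y$ is $\psi$-exceptional --- and this is genuinely needed in the terminal case, since a non-exceptional $\overline{\pi_m^{-1}(E_i)}$ would carry coefficient $0$ in $K_{\tilde Y}-\psi^*K_{X_m}$, yielding only $a_i=0$ and destroying strict positivity. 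Your mechanism (image inside $\sing(X_m)$, which has codimension $\geq 2$ by normality of $X_m$) is the right one; note it cannot be weakened to ``image inside $\pi_m^{-1}(X_{sing})$'' alone, since for the $A_1$-surface of Example~\ref{A1} the set $\pi_m^{-1}(0)$ is a divisor in $X_m$.

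The genuine gap is your justification of the inclusion $\pi_m^{-1}(X_{sing})\subset\sing(X_m)$. The inference ``the Jacobian of $X_m$ is block lower-triangular with the Jacobian of $X$ at $\gamma(0)$ on the diagonal, so a rank defect at $\gamma(0)$ produces a rank defect at $\gamma$'' is false as linear algebra: the rank of a block lower-triangular matrix is not bounded by the ranks of its diagonal blocks, because the sub-diagonal blocks can restore rank. Concretely, for $X=V(x^2)\subset\bA^1$ and $m=1$ the Jacobian at $\gamma=(0,a)$, $a\neq 0$, is $\left(\begin{smallmatrix}0&0\\2a&0\end{smallmatrix}\right)$ of rank $1$, and indeed $X_1=V\bigl((x^{(0)})^2,\,x^{(0)}x^{(1)}\bigr)$ is \emph{smooth} at $\gamma$ although $\gamma(0)$ is a singular point of $X$; so the inclusion itself fails without reducedness, and any proof must use it. The honest computation is that the Jacobian of $X_m$ at $\gamma$ is the matrix over $k$ of multiplication by $J(t)=\mathrm{Jac}(f_1,\ldots,f_r)(\gamma(t))$ on $(k[t]/(t^{m+1}))^N$, whose rank is $\sum_i(m+1-e_i)$ in terms of the Smith invariants $e_i$; one then uses that $X$ is reduced, so all $(c+1)\times(c+1)$ minors of the Jacobian ($c=\codim X$) lie in $I(X)$ and hence vanish identically along $\gamma$, forcing $e_{(1)}+\cdots+e_{(c+1)}\geq m+1$. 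When $r=c$ (hypersurfaces, or locally complete intersections) this gives $\mathrm{rank}\leq c(m+1)-1$ at every jet over a singular point and the inclusion follows from the dimension bound $\dim_\gamma X_m\geq (N-c)(m+1)$; but for the general normal $X$ of the theorem ($r>c$), these Smith-form constraints alone do not rule out $\mathrm{rank}=c(m+1)$, so the claimed ``direct Jacobian calculation'' does not close the step, and an additional argument (or a restriction to the l.c.i. case, or a citation of a proved version of this inclusion) is required. To be fair, the paper's own proof silently assumes the equivalent statement, so you have isolated a real crux; but your proposed verification of it would fail as written.
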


\begin{proof}
  By Theorem \ref{Qgor},  $X$ is $\bQ$-Gorenstein.
  By Lemma \ref{diagram}, we obtain a proper birational morphisim 
  $\psi:\tilde Y \to X_m$ with non-singular $\tilde Y$ such that the diagram 
  (\ref{d}) is commutative.
  Then, it follows:
  $$K_{\tilde Y}=\psi^*K_{X_m}+\sum_i a_i F_i,$$
  for $\psi$-exceptional prime divisors  $F_i$.
  As an exceptional prime divisor  for $f_m$ is the restriction $F_i|_{Y_m}$ and it is of  the form $(\pi_m^Y)^*(E_i)$ for a prime divisor $E_i$ on $Y$, we obtain
  $$K_{Y_m}=f_m^*K_{X_m}+\sum_i a_i (\pi_m^Y)^*(E_i).$$
 Restricting this equality on the section $\sigma_m(Y)$,
 we obtain 
 \begin{equation}
\label{hikaku}
 K_{Y_m}|_{\sigma_m(Y)}=f_m^*(K_{X_m}|_{\sigma_m(X)})+\sum_i a_i (\pi_m^Y)^*(E_i)|_{\sigma_m(Y)}.
 \end{equation}
 As we have  $K_{X_m}|_{\sigma_m(X)}=(m+1)K_X$ with identifying $X$ and $\sigma_m(X)$, the equality (\ref{hikaku}) is transfered by the isomorphism $\sigma_m(Y)\simeq Y$ as follows:
\begin{equation}
\label{coef}
(m+1)K_Y=f^*((m+1)K_X)+ \sum_i a_i E_i.
\end{equation}
 Therefore, $a_i\geq 0$ (resp. $a_i> 0$, $a_i>-1$) for every $i$ implies $X$ has at worst canonical (resp. terminal, log-terminal) singularities.
\end{proof}

\begin{thm}
\label{logcano}
Assume $char k=0$.
If $X_m$ has at worst log-canonical singularities for an $m\in \bN$, 
then $X$ has at worst  log-terminal singularities.
\end{thm}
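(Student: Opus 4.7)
The plan is to mirror the proof of Theorem \ref{cano} almost verbatim, and then exploit the fact that the discrepancy relation carries a factor of $m+1$. This factor converts the non-strict bound $a_i\geq -1$, which is what the log-canonicality of $X_m$ provides, into a strict bound on the discrepancies of $X$, thereby promoting a log-canonical hypothesis on the jet scheme to a log-terminal conclusion on the base (assuming, as one must, that $m\geq 1$; the $m=0$ case would force the false implication ``log-canonical $\Rightarrow$ log-terminal'' on $X$ itself).

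First I would invoke Theorem \ref{Qgor} to see that $X$ is $\bQ$-Gorenstein, so that discrepancies of $X$ are defined. I would then fix a log resolution $f:Y\to X$ and apply Lemma \ref{diagram} to obtain a proper birational morphism $\psi:\tilde Y\to X_m$ from a nonsingular $\tilde Y$ containing $Y_m$ as an open subset, fitting into the commutative diagram (\ref{d}). Writing
$$K_{\tilde Y}=\psi^*K_{X_m}+\sum_i a_i F_i$$
over the $\psi$-exceptional prime divisors $F_i$, the log-canonicality of $X_m$ gives $a_i\geq -1$ for every $i$. Restricting to $Y_m$ and then to the section $\sigma_m(Y)$ exactly as in the proof of Theorem \ref{cano}, and using Corollary \ref{canonical} to identify $K_{X_m}|_{\sigma_m(X)}$ with $(m+1)K_X$, I would arrive at
$$(m+1)K_Y=f^*((m+1)K_X)+\sum_i a_i E_i,$$
where the $E_i$ are the $f$-exceptional primes corresponding to the $F_i$.

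Dividing by $m+1$ expresses the discrepancy of $X$ along $E_i$ as $a_i/(m+1)$. Since $a_i\geq -1$ and $m+1\geq 2$, we obtain
$$\frac{a_i}{m+1}\geq -\frac{1}{m+1}>-1,$$
so every $f$-exceptional discrepancy of $X$ is strictly greater than $-1$, i.e.\ $X$ is log-terminal. There is no real technical obstacle here beyond the machinery already built in Lemma \ref{diagram} and in the proof of Theorem \ref{cano}; the only genuinely new input is the numerical observation that the rescaling factor $m+1\geq 2$ converts a weak inequality into a strict one, and this is precisely why the weaker hypothesis (log-canonical) on $X_m$ suffices to yield the strictly stronger property (log-terminal) on $X$.
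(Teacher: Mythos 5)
Your proposal is correct and follows essentially the same route as the paper: the paper's proof simply points back to the equality (\ref{coef}) from Theorem \ref{cano} and observes that $a_i\geq -1$ forces $a_i/(m+1)>-1$. Your only addition is to make explicit the hypothesis $m\geq 1$ needed for the inequality $-1/(m+1)>-1$ to be strict, a point the paper leaves implicit but which is indeed necessary (for $m=0$ the statement would falsely assert that log-canonical implies log-terminal for $X$ itself).
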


\begin{proof}
  Look at the equality (\ref{coef}) in the proof of Theorem \ref{cano}.
  The log-canonicity condition $a_i\geq -1$ of $X_m$ implies 
  $$\frac{a_i}{m+1}> -1$$ which is the log-terminality condition of $X$.
 \end{proof}

\begin{exmp}
Let $X$ be a hypersurface of $\bA_\bC^n$ defined by $x_1^2+x_2^2+\cdots+
x_n^2=0$. It is well known that $X$ has an  isolated singularity at the origin 0,
which is a canonical singularity for $n\geq 3$ and a terminal singularity for $n\geq 4$. 
We will show that $X_1$ has canonical singularities for $n\geq 4$, and has
terminal singularities for $n\geq 5$.
The 1-jet scheme $X_1$ is defined by
$$\sum_{i=1}^nx_i^2=0,\ \  \sum_{i=1}^n x_ix^{(1)}_i=0$$
in $\bA^{2n}=\spec \bC[x_1,\ldots, x_n,x_1^{(1)},\ldots, x_n^{(1)}]$.
Let $H$ and $L$ be the hypersurfaces in $\bA^{2n}$ defined by $\sum_{i=1}^nx_i^2=0$ and
$\sum x_ix^{(1)}_i=0$, respectively.
Let $f:Z\to \bA^{2n}$  be the blow up of $\bA^{2n}$ by the center $\sing X_1=\{x_1=\cdots=x_n=0\}$.
Then the proper transform $\tilde X_1$ of $X_1$ is the intersection $\tilde H\cap \tilde L$ of the proper transforms $\tilde H$ and $\tilde L$ of $H$ and $L$,
respectively.
We can see that $\tilde X_1$ is non-singular, therefore $\tilde X_1$ is a resolution of the singularities of $X_1$.
Let $E$ be the exceptional divisor of $f$, then $\tilde H\sim -2E$ and $\tilde L
\sim -E$.
By this, it follows
$$K_{\tilde H}=(K_Z+\tilde H)|_{\tilde H}\sim((n-1)E-2E)|_{\tilde H}=(n-3)E|_{\tilde H}$$
$$K_{\tilde H\cap\tilde L}=(K_{\tilde H}+\tilde L)|_{\tilde H\cap\tilde L}\sim((n-3)E-E)|_{\tilde H\cap\tilde L}=(n-4)E|_{\tilde{X_1}}.$$
Therefore, $n\geq 4$ if and only if $X_1$ is canonical and $n>4$ if and only if $X_1$ is terminal.

\end{exmp}

\section{Morphisms of jet schemes}
 \begin{thm}
   Let $f:X \to Y$ be a morphism of $k$-schemes.
   If the induced morphism $f_m:X_m\to Y_m$ is flat for some $m\in \bN$,
   then $f$ is flat.
 \end{thm}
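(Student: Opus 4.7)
The plan is to realize $X$ as a retract, in the category of $Y$-schemes, of the base change of $f_m$ along the canonical section $\sigma_m^Y:Y\to Y_m$, and then to invoke the elementary fact that a direct summand of a flat module is flat.

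First I would form the fiber product $Z:=X_m\times_{Y_m}Y$, where $Y\to Y_m$ is $\sigma_m^Y$, with projections $p:Z\to X_m$ and $q:Z\to Y$. Since $f_m$ is flat, the base change $q$ is flat. Using the functorial identity $f_m\circ\sigma_m^X=\sigma_m^Y\circ f$, the pair $(\sigma_m^X,f)$ defines, via the universal property of the fiber product, a morphism $s:X\to Z$ with $p\circ s=\sigma_m^X$ and $q\circ s=f$. Define the candidate retraction $r:=\pi_m^X\circ p:Z\to X$.

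Next I would verify two compatibilities. First, $r\circ s=\pi_m^X\circ\sigma_m^X=\mathrm{id}_X$, so $s$ is a split monomorphism with retraction $r$. Second, $f\circ r=q$: indeed
$$f\circ\pi_m^X\circ p=\pi_m^Y\circ f_m\circ p=\pi_m^Y\circ\sigma_m^Y\circ q=q,$$
using respectively the commutativity of the jet-square for $f$, the defining identity $f_m\circ p=\sigma_m^Y\circ q$ of the fiber product, and $\pi_m^Y\circ\sigma_m^Y=\mathrm{id}_Y$. Together these facts exhibit $X$ as a retract of $Z$ in the category of $Y$-schemes.

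To conclude, flatness is local, so fix $x\in X$ with $y=f(x)$ and $z=s(x)$, which satisfy $r(z)=x$ and $q(z)=y$. Localizing yields $\o_{Y,y}$-algebra maps $r^*:\o_{X,x}\hookrightarrow\o_{Z,z}$ and $s^*:\o_{Z,z}\twoheadrightarrow\o_{X,x}$ with $s^*\circ r^*=\mathrm{id}$, so $\o_{X,x}$ is an $\o_{Y,y}$-module direct summand of $\o_{Z,z}$. Since $q$ is flat, $\o_{Z,z}$ is a flat $\o_{Y,y}$-module, and hence so is its summand $\o_{X,x}$, proving $f$ is flat at $x$. I do not anticipate a real obstacle: the argument is essentially a diagram chase that threads the functorial compatibilities $\pi_m\circ\sigma_m=\mathrm{id}$ together with $f\circ\pi_m^X=\pi_m^Y\circ f_m$ and $f_m\circ\sigma_m^X=\sigma_m^Y\circ f$. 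The only mildly delicate step is checking $f\circ r=q$, which is the short computation displayed above.
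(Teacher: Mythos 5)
Your proof is correct, and it takes a genuinely different route from the paper's. The paper argues the contrapositive, affine-locally: writing $X_m=\spec \oa$ and $Y_m=\spec \ob$ with $\oa,\ob$ graded and degree-zero parts $A,B$, it invokes the ideal-theoretic flatness criterion (Matsumura, Theorem 77): if $B\to A$ is not flat, some ideal $I\subset B$ has $I\otimes_B A\to A$ non-injective, and the grading makes $I\otimes_B A$ a direct summand of $I\ob\otimes_{\ob}\oa$, so the non-injectivity propagates to $I\ob\otimes_{\ob}\oa\to\oa$, contradicting flatness of $f_m$. You instead argue directly: base-change $f_m$ along the section $\sigma_m^Y$ to obtain a flat $q:Z\to Y$, and exhibit $X$ as a retract of $Z$ in the category of $Y$-schemes via $s$ and $r=\pi_m^X\circ p$, so that locally $\o_{X,x}$ is an $\o_{Y,y}$-module direct summand of the flat module $\o_{Z,z}$. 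Both proofs ultimately exploit the same structural feature --- the splitting $\pi_m\circ\sigma_m=\mathrm{id}$, i.e.\ that the degree-zero part of the graded jet algebra is the base ring --- but you package it geometrically and avoid the ideal criterion entirely; in algebraic terms your $Z$ is $\spec(\oa/\ob_+\oa)$ and your retraction is the composite $A\to \oa/\ob_+\oa\to \oa/\oa_+=A$, well defined because $f_m^*$ is graded, so $\ob_+\oa\subset\oa_+$. What your version buys is a direct (non-contrapositive), more categorical argument using only standard permanence properties (flatness under base change, flatness of summands) that would apply verbatim to any functorial construction equipped with compatible projections and sections; what the paper's version buys is a self-contained computation that makes the role of the grading explicit. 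All the functorial identities you rely on, $f_m\circ\sigma_m^X=\sigma_m^Y\circ f$ and $f\circ\pi_m^X=\pi_m^Y\circ f_m$, do hold, and your verification of $f\circ r=q$ is exactly the delicate point done correctly, so there is no gap.
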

 
 \begin{proof}
 We will show that $f_m:X_m\to Y_m$ is not flat for every $m\in \bN$ if 
 $f:X \to Y$ is not flat.
 As the problem is local, we may assume that $X$ and $Y$ are affine schemes $\spec A$ and $\spec B$, respectively.
 Then, the $m$-jet schemes $X_m$ and $Y_m$ are also affine. 
 Denote $X_m=\spec \overline{A}$ and $Y_m=\spec \overline{B}$.
 Here, $\oa$ is a graded $A$-algebra with $\oa_0=A$ and $\ob$ is a graded $B$-algebra with $\ob_0=B$.
 Decompose $\oa,\ob$ as follows:
 $$\oa=A\oplus \oa_+,\ \ \ \ob=B\oplus \ob_+,$$
 where $\oa_+,  \ob_+$ are the parts of positive degree.
 We note that every $B$-module $M$ can be regarded as a $\ob$-module in the canonical way, i.e. for $b=b_0+b_+\in \ob$ $(b_0\in B, b_+\in \ob_+)$ and $x\in M$, define $bx$ by 
 $b_0x$.
 By this, we have $M\otimes_BA=M\otimes_{\ob} A$ for every $B$-module $M$.
 
 For a proof of the theorem, we assume that $f^*:B\to A$ is not flat.
 By \cite[Theorem 77]{mats}, it is equivalent to that there exists an ideal $I\subset B$ such that the canonical map 
 $$I\otimes_BA\stackrel{\psi}\longrightarrow A$$ 
 is not injective.
 This map appears in the following commutative diagram:
 $$
 \begin{array}{ccc}
 I\otimes_BA&\stackrel{\psi}\longrightarrow& A\\
 \varphi \downarrow\ \ \ && \bigcap\\
 I\ob\otimes_{\ob}\oa&\stackrel{\overline\psi}\longrightarrow &\oa.\\
 \end{array}$$
 Here, by the $\ob$-modules decomposition $I\ob=I\oplus I\ob_+$, we have $ I\ob\otimes_{\ob}\oa=(I\oplus I\ob_+)
\otimes_{\ob}(A\oplus \oa_+)$, which shows that $I\otimes_{\ob}A=I\otimes_BA$ is a direct summand of  $ I\ob\otimes_{\ob}\oa$.
Therefore the map $\varphi$ is injective and the non-injectivity of $\psi$ yields the non-injectivity of $\overline\psi$, which shows the non-flatness of 
$\ob\to \oa$.
 \end{proof}
 
\begin{exmp}
The converse of the theorem does not hold.
 Let $X\subset \bA_\bC^3$ be defined  by the equation
 $t^d+x^d+y^d=0, $ with $d\geq 3$, then it is  a normal surface with the singularity at the  origin $\bold 0=(0,0,0)$.
 Let $f:X\to Y=\bA_\bC^1$ be the first projection $(t,x,y)\mapsto t$.
 Then, as $f$ is a surjective morphism from a reduced scheme to a non-singular curve, it is flat.
 However, for every $m\geq 2$ the induced morphism $f_m:X_m\to Y_m$ is non-flat.
 This is shown as follows: 
 For every $m\in \bN$, consider the commutative diagram:
 $$\begin{array}{ccc}
 X_m&\stackrel{f_m}\longrightarrow& Y_m\\
\pi_m^X \downarrow\ \ \ \ & & \ \ \ \ \downarrow\pi_m^Y\\
 X&\stackrel{f}\longrightarrow& Y\\
 \end{array}
$$
As $\pi_m^Y$ is smooth, it is sufficient to prove that $f\circ \pi_m^X$ is not flat for $m\geq 2$.
Note that
$\pi_m^{-1}(X\setminus \{\bold 0\})$ is irreducible and of dimension 
$2(m+1)$.

For $m<d$, 
$(\pi_m^X)^{-1}(\bold 0)=(\pi_m^{\bA^3})^{-1}(\bold 0)=\bA^{3m}$.
For $m\geq d$, as $(\pi_m^X)^{-1}(\bold 0)$ is defined by $m+1-d$ equations in $\bA^{3m}$, it follows 
$$\dim (\pi_m^X)^{-1}(\bold 0)\geq 3m-(m+1)+d\geq 2(m+1).$$
If we assume that $m\geq 2$, in the both cases above we have 
$$\dim (f\circ \pi_m^X)^{-1}(0)\geq \dim (\pi_m^X)^{-1}(\bold 0)>2m+1=
\dim (f\circ \pi_m^X)^{-1}(t),$$
where $0\neq t\in Y$.
This yields that $f\circ \pi_m^X$ is not flat.

\end{exmp}

\makeatletter \renewcommand{\@biblabel}[1]{\hfill#1.}\makeatother

\end{document}